%
%
%

\documentclass[envcountsame,envcountsect,runningheads,a4paper]{llncs}

\usepackage[all, knot]{xy}

\usepackage{amsmath}
\usepackage{amssymb}
\usepackage{amsfonts}
\usepackage{amsxtra}
\setcounter{tocdepth}{3}
\usepackage{graphicx}

\usepackage{url}
\newcommand{\keywords}[1]{\par\addvspace\baselineskip
\noindent\keywordname\enspace\ignorespaces#1}

\spnewtheorem{construction}[theorem]{Construction}
{\bfseries}{}
\spnewtheorem{myexample}[theorem]{Example}
{\bfseries}{}
\spnewtheorem{myremark}[theorem]{Remark}
{\bfseries}{}
\spnewtheorem{mydefinition}[theorem]{Definition}
{\bfseries}{}

\begin{document}

\mainmatter  

\title{Commuting Groups and the Topos of Triads}

\titlerunning{Commuting Groups and the Topos of Triads}

\author{Thomas M. Fiore and Thomas Noll}


\institute{Department of Mathematics and Statistics\\
University of Michigan-Dearborn \\ 4901 Evergreen Road \\ Dearborn,
MI 48128, U.S.A. \\ \texttt{tmfiore@umd.umich.edu}  \\  Escola Superior de M\'{u}sica de Catalunya \\
Departament de Teoria, Composici\'{o} i Direcci\'{o} \\
C. Padilla, 155 - Edifici L'Auditori \\
08013 Barcelona, Spain \\ \texttt{noll@cs.tu-berlin.de}  }

\toctitle{Commuting Groups and the Topos of Triads}
\tocauthor{Thomas M. Fiore and Thomas Noll}
\maketitle

\begin{abstract}
The goal of this article is to clarify the relationship between the topos of triads and the neo-Riemannian $PLR$-group. To do this, we first develop some theory of generalized interval systems: 1) we prove the well known fact that every pair of dual groups is isomorphic to the left and right regular representations of some group (Cayley's Theorem), 2) given a simply transitive group action, we show how to construct the dual group, and 3) given two dual groups, we show how to easily construct sub dual groups. Examples of this construction of sub dual groups include Cohn's hexatonic systems, as well as the octatonic systems. We then enumerate all $\mathbb{Z}_{12}$-subsets which are invariant under the triadic monoid and admit a simply transitive $PLR$-subgroup action on their maximal triadic covers. As a corollary, we realize all four hexatonic systems and all three octatonic systems as Lawvere--Tierney upgrades of consonant triads.
\keywords{$PLR$-group, duality, sub dual groups, hexatonic systems, octatonic, topos, topos of triads, triadic monoid}
\end{abstract}

\section{Introduction}
In \cite{cohn1997}, Richard Cohn coined the term ``the overdetermined triad" in order to draw the reader's attention away from the celebrated acoustic attractiveness of the major and minor triads to some surprising algebraic properties of the family of the consonant triads. The starting point for the present investigation is a puzzling ramification of this over-determination of the triads even within the realm of algebraic music theory. In particular, the musical and music-theoretical prevalence of three tone-collections --- major/minor mixture, hexatonic, and octatonic --- emerges from the study of triads in quite different algebraic ways.

In the context of a group-theoretic approach (see \cite{fioresatyendra2005}) one obtains these collections as carrier sets, which are covered by particular families of triads, which are orbits under certain group actions. Under this perspective it is the interplay of several triads, which makes these collections prevalent among others.

In the context of a monoid-theoretic approach (see \cite{nollToposOfTriads}) one obtains these collections as extensions of a single triad. The stability properties of the set $\{0, 4, 7\}$ within $\mathbb{Z}_{12}$ can be studied in terms of a monoid of 8 affine transformations. On top of this monoid one obtains three interesting Lawvere--Tierney topologies, which also imply the prevalence of the three sets $\{0, 3, 4, 7\}$,  $\{0, 3, 4, 7, 8, 11\}$ and $\{0, 1, 3, 4, 6, 7, 9, 10\}$ as extensions of a single triad.

How do these two approaches fit together? The present article prepares the group-theoretical grounding for a comprehensive answer and anticipates the remaining work on the topos-theoretical side in the concrete case of $\mathbb{Z}_{12}$.

Though this article does not require any topos theory, we have included an Appendix on Topos Theory in Section~\ref{sec:appendix} for any readers who want to recall the main notions.

\section{Dual Groups and Cayley's Theorem}

We first briefly recall the notion of dual groups in transformational analysis, give the fundamental example of left and right regular representations, and show how to construct general dual groups in view of the left and right regular representations. As an example, we reconstruct the well-known $PLR$-group as the dual group to the $T/I$-group acting on consonant triads.

\begin{mydefinition}[Dual groups in the sense of Lewin]
Let $\text{\rm Sym}(S)$ be the symmetric group on the set $S$. Two subgroups $G$ and $H$ of the symmetric group $\text{\rm Sym}(S)$ are called {\it dual} if their natural actions on $S$ are simply transitive and each is the centralizer of the other, that is,
$$C_{\text{\rm Sym}(S)}(G)=H \;\; \text{ and } \;\; C_{\text{\rm Sym}(S)}(H)=G.$$
\end{mydefinition}

\begin{myexample}[Left and right regular representations] \label{examp:left/right_regular_representations}
If $G$ is any group, the left and right regular representations $\lambda,\rho \colon G \to \text{Sym}(G)$ are defined by $\lambda_g(h)=gh$ and $\rho_g(h)=hg^{-1}$. As is well known, $\lambda$ and $\rho$ are injective and the natural actions of $\lambda(G)$ and $\rho(G)$ are simply transitive. The images $\lambda(G)$ and $\rho(G)$ clearly commute, since $\lambda_{g_1}\rho_{g_2}(h)=g_1hg_2^{-1}=\rho_{g_2}\lambda_{g_1}(h)$. In fact, $\rho(G)$ is the centralizer of $\lambda(G)$ in $\text{Sym}(G)$, for if $f\colon G \to G$ is a bijection satisfying $f(gh)=gf(h)$ for all $g$ and $h$ in $G$, then $f(h)=f(he)=hf(e)$ and $f$ is the same as $\rho(f(e)^{-1})$. Similarly, $\lambda(G)$ is the centralizer of $\rho(G)$ in $\text{Sym}(G)$. The groups $\lambda(G)$ and $\rho(G)$ provide the most basic example of dual groups. All other examples of dual groups are isomorphic to this one, as we see in the following construction.
\end{myexample}

\begin{construction}[Construction of the dual group] \label{construction:dual_group}
Given a group $G$ acting simply transitively on a set $S$, we may construct the dual group as follows.
We consider $G$ as a subgroup of $\text{Sym}(S)$ via the homomorphism $G \to \text{Sym}(S), g \mapsto (s \mapsto gs)$, which is an injection by simple transitivity. We suggestively call this subgroup $\lambda(G)$, although it is not the same as $\lambda(G)$ in Example~\ref{examp:left/right_regular_representations}. Fix an element $s_0 \in S$. Again by simple transitivity, the function $G \to S, h \mapsto hs_0$ is a bijection. The natural action of $\lambda(G)$ on $S$ is essentially the same as left multiplication: $g(hs_0)=(gh)s_0$. We now define a second injection $G \to \text{Sym}(S)$ by $g \mapsto (hs_0 \mapsto hg^{-1}s_0)$ and call the image $\rho(G)$. The group $\rho(G)$ is the dual group to $\lambda(G)$, which we sought to construct. The bijection $G \to S, h \mapsto hs_0$ and the injection $G \to \text{Sym}(S), g \mapsto (hs_0 \mapsto hg^{-1}s_0)$ depend on the choice of $s_0$, but the group $\rho(G)$ does not.
\end{construction}

\begin{myexample}[Construction of $PLR$-group]
If in Construction~\ref{construction:dual_group} we take $G$ to be the $T/I$-group, $S$ to be the set of major and minor triads, and $s_0=\{0,4,7\}$, then $\rho(G)$ is the neo-Riemannian $PLR$-group. For example, the operations parallel, leading tone exchange, and relative, denoted $P$, $L$, and $R$ respectively, correspond to right multiplication by $I_7$, $I_{11}$, and $I_4$.
\[ \aligned
P: \;\; & T_n\{0,4,7\} \mapsto T_nI_7\{0,4,7\} & \text{ and }  \;\; & I_n\{0,4,7\} \mapsto I_nI_7\{0,4,7\} \\
L: \;\; & T_n\{0,4,7\} \mapsto T_nI_{11}\{0,4,7\} & \text{ and }  \;\; & I_n\{0,4,7\} \mapsto I_nI_{11}\{0,4,7\} \\
R: \;\; & T_n\{0,4,7\} \mapsto T_nI_4\{0,4,7\} & \text{ and }  \;\; & I_n\{0,4,7\} \mapsto I_nI_4\{0,4,7\}.
\endaligned
\]
The embedding $\rho\colon T/I \to \text{Sym}(S)$ takes the generators $I_1$ and $I_8$ to $L$ and $R$ respectively (recall $I_1 \circ I_8=T_{1-8}=T_5$). If we choose a different $s_0$, then we obtain another isomorphism of the $T/I$-group with the $PLR$-group.
\end{myexample}

\section{Sub Dual Groups}

We next give a very practical method for constructing sub dual groups from dual groups. This method minimizes computation: instead of checking commutativity of all functions on every element, it suffices to evaluate a putative commuting function on the single element $s_0$ and check if the output lies in $S_0$.

\begin{theorem}[Construction of sub dual groups] \label{thm:sub_dual_groups}
Let $G,H \leq \text{\rm Sym}(S)$ be dual groups, $G_0$ a subgroup of $G$, and $s_0$ an element in $S$. Let $S_0$ be the orbit of $s_0$ under the action of $G_0$. Then the following hold.
\begin{enumerate}
\item \label{thm:sub_dual_groups:G_0_simple_transitivity}
The group $G_0$ acts simply transitively on $S_0$.
\item \label{thm:sub_dual_groups:s_0_determines_G_0}
If $g \in G$ and $gs_0$ is in $S_0$, then $g \in G_0$. In particular, if $g \in G$ and $gs_0$ is in $S_0$, then $g$ preserves $S_0$ as a set, that is, $g(S_0) \subseteq S_0$.
\item \label{thm:sub_dual_groups:H_0_simple_transitivity}
Let $H_0$ denote the subgroup of $H$ consisting of those elements $h\in H$ with $hs_0 \in S_0$.  Then $H_0$ acts simply transitively on $S_0$.
\item
Restriction from $S$ to $S_0$ embeds $G_0$ and $H_0$ in $\text{\rm Sym}(S_0)$. We denote their images in $\text{\rm Sym}(S_0)$ by $G_0\vert_{S_0}$ and $H_0\vert_{S_0}$.
\item \label{thm:sub_dual_groups:extensions}
If $g \in \text{\rm Sym}(S_0)$ and $g$ commutes with $H_0\vert_{S_0}$, then $g$ admits a unique extension to $S$ which belongs to $G$. This extension necessarily belongs to $G_0$ by \ref{thm:sub_dual_groups:s_0_determines_G_0}. Similarly, if $h \in \text{\rm Sym}(S_0)$ commutes with $G_0\vert_{S_0}$, then $h$ admits a unique extension to $S$ which belongs to $H$. This extension necessarily belongs to $H_0$.
\item \label{thm:sub_dual_groups:restricted_subgroups_dual}
The groups $G_0\vert_{S_0}$ and $H_0\vert_{S_0}$ are dual in $\text{\rm Sym}(S_0)$.
\end{enumerate}
\end{theorem}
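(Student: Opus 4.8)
The plan is to establish parts (i) through (v) essentially by bookkeeping with simple transitivity, and then to read off the duality assertion (vi) from the extension property (v); that extension property is where the one genuine idea lies. Throughout, two facts do all the work: since $G$ and $H$ are dual, each acts \emph{freely} on $S$ (so a permutation in either group is determined by its value at a single point, and any element with a fixed point is the identity), and every element of $G$ commutes in $\mathrm{Sym}(S)$ with every element of $H$.

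For (i), $G_0$ is transitive on $S_0$ by definition of the orbit and free because $G$ is free on $S$. For (ii), if $gs_0 \in S_0$ write $gs_0 = g_0 s_0$ with $g_0 \in G_0$; then $g_0^{-1}g$ fixes $s_0$, hence is the identity, so $g = g_0 \in G_0$ and $g(S_0) = g_0 G_0 s_0 = S_0$. For (iii) one first checks $H_0$ is a subgroup: writing $h_i s_0 = g_i s_0$ with $g_i \in G_0$ and using $h_1 g_2 = g_2 h_1$ gives $h_1 h_2 s_0 = g_2 g_1 s_0 \in S_0$, and similarly $h^{-1} s_0 = g^{-1} s_0 \in S_0$; the same commutation shows $H_0$ maps $S_0$ into itself. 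Transitivity of $H_0$ on $S_0$ comes from transitivity of $H$ on $S$: given $g_0 s_0 \in S_0$, choose $h \in H$ with $h s_0 = g_0 s_0$, so $h \in H_0$; freeness is inherited from $H$. Part (iv) is then immediate: restriction is a well-defined homomorphism on $G_0$ and on $H_0$ since both preserve $S_0$, and it is injective because a restriction that fixes $s_0$ comes from the identity of $\mathrm{Sym}(S)$ by freeness.

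The heart of the argument is (v). Suppose $g \in \mathrm{Sym}(S_0)$ commutes with $H_0\vert_{S_0}$. Put $s_1 := g(s_0) \in S_0$ and let $g_0 \in G_0$ be the unique element with $g_0 s_0 = s_1$, which exists by (i). I claim $g_0\vert_{S_0} = g$. Given any $s \in S_0$, by (iii) there is $h_0 \in H_0$ with $h_0 s_0 = s$, and then, using that $g$ commutes with $h_0\vert_{S_0}$ and that $g_0$ and $h_0$ commute in $\mathrm{Sym}(S)$,
\[ g(s) = g(h_0 s_0) = h_0\bigl(g(s_0)\bigr) = h_0 s_1 = h_0 g_0 s_0 = g_0 h_0 s_0 = g_0 s. \]
Thus $g_0 \in G$ extends $g$; it lies in $G_0$ by (ii); and it is the only element of $G$ extending $g$, since two such agree at $s_0 \in S_0$ and $G$ acts freely on $S$. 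The dual statement, that a permutation of $S_0$ commuting with $G_0\vert_{S_0}$ extends uniquely to an element of $H$, hence of $H_0$, is proved by interchanging the roles of $G_0$ and $H_0$.

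Finally, for (vi): by (i) and (iii) both $G_0\vert_{S_0}$ and $H_0\vert_{S_0}$ act simply transitively on $S_0$, and since $G$ and $H$ commute elementwise so do their restrictions, giving $H_0\vert_{S_0} \subseteq C_{\mathrm{Sym}(S_0)}(G_0\vert_{S_0})$. Conversely, any $h$ centralizing $G_0\vert_{S_0}$ is the restriction of an element of $H_0$ by (v), so $C_{\mathrm{Sym}(S_0)}(G_0\vert_{S_0}) = H_0\vert_{S_0}$, and symmetrically $C_{\mathrm{Sym}(S_0)}(H_0\vert_{S_0}) = G_0\vert_{S_0}$; hence the two restricted groups are dual. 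I expect the main obstacle to be organizational rather than conceptual: every step requires passing between the description of $S_0$ as a $G_0$-orbit and its description via the $H_0$-action, and each such passage must be licensed by the commutation relation $gh = hg$ between $G$ and $H$ together with freeness — getting the intermediate group elements and the quantifiers right in (iii) and (v) is the only place real care is needed.
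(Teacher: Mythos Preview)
Your proof is correct and follows essentially the same route as the paper's: each part is handled by the same combination of freeness (to pin down group elements by their value at $s_0$) and elementwise commutation of $G$ with $H$ (to pass between the $G_0$- and $H_0$-descriptions of $S_0$), and in (v) both you and the paper pick the extension by its value at $s_0$ and verify agreement on all of $S_0$ via the $H_0$-parametrization. The only cosmetic differences are that you explicitly verify $H_0$ is a subgroup (the paper simply asserts it), and in (v) you select the extension directly from $G_0$ via (i) rather than from $G$ and then invoking (ii); neither changes the argument.
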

\begin{proof}
\begin{enumerate}
\item
By the definition $S_0=G_0s_0$, we have $gs \in S_0$ for all $g \in G_0$ and $s \in S_0$. The orbit of $s_0$ is all of $S_0$, so the action of $G_0$ on $S_0$ is transitive. Simplicity of the $G_0$-action on $S_0$ follows from the simplicity of the $G$-action on $S$.
\item
Suppose $g \in G$ and $gs_0=s \in S_0$. By the simple transitivity of $G_0$ acting on $S_0$ from \ref{thm:sub_dual_groups:G_0_simple_transitivity}, there is a unique $\overline{g} \in G_0$ such that $\overline{g} s_0=s$. But on the other hand, $g$ and $\overline{g}$ are in $G$, which acts simply transitively on $S$, so $g=\overline{g}\in G_0$.
\item
Suppose $h \in H$ and $hs_0 \in S_0=G_0s_0$. An arbitrary element of $S_0$ has the form $gs_0$ for some $g \in G_0$, and $h(gs_0)=g(hs_0)\in G_0s_0=S_0$. Thus $h(S_0) \subseteq S_0$, and the action of $H_0$ on $S$ restricts to an action on $S_0$. The orbit of $s_0$ under $H_0$ is $S_0$, for if $gs_0\in S_0$ is an arbitrary element, then there exists $h \in H$ such that $hs_0=gs_0 \in S_0$, which implies $h \in H_0$. Simplicity follows from the simplicity of the $H$-action on $S$.
\item
Suppose $g,g'\in G_0$ and $g\vert_{S_0}=g'\vert_{S_0}$. Then $gs_0=g's_0$ and $g=g'$ by the simple transitivity of the $G$-action on $S$. The injectivity of the $H_0$-embedding is similar.
\item
Suppose $g \in \text{\rm Sym}(S_0)$ and $g$ commutes with $H_0\vert_{S_0}$. By the simple transitivity of the $G$-action on $S$, there is a unique $\overline{g} \in G$ such that $\overline{g}s_0=gs_0$. By \ref{thm:sub_dual_groups:H_0_simple_transitivity} a general element of $S_0$ has the form $hs_0$ with $h \in H_0$. We have $\overline{g}hs_0=h\overline{g}s_0=h\vert_{S_0}gs_0=gh\vert_{S_0}s_0=ghs_0$. Thus $\overline{g}$ and $g$ agree on $S_0$ and $\overline{g}$ is a true extension of $g$. The proof for the analogous statement concerning elements in $\text{\rm Sym}(S_0)$ which commute with $G_0\vert_{S_0}$ is similar.
\item
We already have simply transitive group actions of $G_0\vert_{S_0}$ and $H_0\vert_{S_0}$ on $S_0$ by \ref{thm:sub_dual_groups:G_0_simple_transitivity} and \ref{thm:sub_dual_groups:H_0_simple_transitivity}. The groups $G_0\vert_{S_0}$ and $H_0\vert_{S_0}$ commute because $G$ and $H$ do. Suppose $h \in \text{Sym}(S_0)$ commutes with $G_0\vert_{S_0}$. Then by \ref{thm:sub_dual_groups:extensions}, the bijection $h$ admits an extension to $\overline{h}\in H_0 \subseteq \text{Sym}(S)$, which says precisely $h \in H_0\vert_{S_0}$. This proves  $C_{\text{Sym}(S_0)}(G_0\vert_{S_0})=H_0\vert_{S_0}$. The proof $C_{\text{Sym}(S_0)}(H_0\vert_{S_0})=G_0\vert_{S_0}$ is similar.
\end{enumerate}
\qed
\end{proof}

\begin{myremark}
In Theorem~\ref{thm:sub_dual_groups}, the subgroups $G_0$ and $H_0$ of $\text{Sym}(S)$ commute, since they are subgroups of the dual groups $G$ and $H$, respectively. However, $G_0$ and $H_0$ are {\it not} dual groups in $\text{Sym}(S)$ when $G_0 \lneqq G$, though the restrictions $G_0\vert_{S_0}$ and $H_0\vert_{S_0}$ are dual in $\text{\rm Sym}(S_0)$. First of all, $G_0$ and $H_0$ do not act simply transitively on $S$. Secondly, the centralizer of $G_0$ is larger than $H_0$, as the centralizer contains at least all of $H$. The isomorphism type of $C_{\text{Sym}(S)}(G_0)$ may be found using the methods of
\cite{peckGeneralizedCommuting}.
\end{myremark}

\begin{corollary}[Transforming the orbits of $G_0$] \label{cor:transforming_orbits}
Let $G,H \leq \text{\rm Sym}(S)$ be dual groups, $G_0$ a subgroup of $G$, and $s_0$ an element in $S$. Let $S_0$ be the orbit of $s_0$ under the action of $G_0$, and $H_0$ the subgroup of $H$ consisting of those elements $h\in H$ with $hs_0 \in S_0$. Then the following hold.
\begin{enumerate}
\item \label{cor:transforming_orbits:transform_of_S_0}
If $k\in H$, then the $G_0$-orbit of $ks_0$ is $kS_0$. Thus, if we transform $s_0$ to $ks_0$ in Theorem~\ref{thm:sub_dual_groups}, then $S_0$ transforms to $kS_0$.
\item
If $k\in H$, then $kH_0k^{-1}$ is precisely the subgroup of $H$ consisting of those $h\in H$ such that $h(ks_0) \in kS_0$. Thus, if we transform $s_0$ to $ks_0$ in Theorem~\ref{thm:sub_dual_groups}, then $H_0$ transforms to $kH_0k^{-1}$.
\item
If $k\in H$, then the restrictions of $G_0$ and $kH_0k^{-1}$ to $kS_0$ are dual in $\text{\rm Sym}(kS_0)$.
\item
Each orbit of $G_0$ can be written as $kS_0$ for some $k\in H$. Moreover, the group $H$ acts transitively on the orbits of $G_0$ by $h(kS_0)=(hk)S_0$.
\end{enumerate}
\end{corollary}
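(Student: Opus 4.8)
The plan is to deduce the whole corollary from Theorem~\ref{thm:sub_dual_groups}, applied with the basepoint $ks_0$ in place of $s_0$; the only genuinely new ingredient is the interaction of $H$ with $G_0$, and the engine throughout is that $G$ and $H$ commute elementwise in $\text{Sym}(S)$ together with the simple transitivity of $H$ on $S$. For part~\ref{cor:transforming_orbits:transform_of_S_0}, I would note that for $k \in H$ and $g \in G_0 \subseteq G$ we have $gk = kg$, hence $g(ks_0) = k(gs_0)$; letting $g$ range over $G_0$ gives $G_0(ks_0) = k(G_0 s_0) = kS_0$, which is exactly the claim.

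For the second part I would use a conjugation trick: for $h \in H$, applying $k^{-1} \in H$ shows that $h(ks_0) \in kS_0$ is equivalent to $(k^{-1}hk)s_0 \in S_0$, and since $k^{-1}hk \in H$ this says precisely $k^{-1}hk \in H_0$, i.e.\ $h \in kH_0k^{-1}$. So the subgroup of $H$ carrying the new basepoint $ks_0$ into the new orbit $kS_0$ is $kH_0k^{-1}$. Part~(iii) is then immediate: substituting $ks_0$ for $s_0$ in Theorem~\ref{thm:sub_dual_groups}, the orbit $S_0$ becomes $kS_0$ by part~\ref{cor:transforming_orbits:transform_of_S_0} and the associated subgroup of $H$ becomes $kH_0k^{-1}$ by part~(ii), so part~\ref{thm:sub_dual_groups:restricted_subgroups_dual} of Theorem~\ref{thm:sub_dual_groups} yields that $G_0\vert_{kS_0}$ and $(kH_0k^{-1})\vert_{kS_0}$ are dual in $\text{Sym}(kS_0)$.

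For part~(iv), surjectivity of $k \mapsto kS_0$ onto the $G_0$-orbits follows from simple transitivity of $H$ on $S$: given a $G_0$-orbit $G_0 t$, pick $k \in H$ with $ks_0 = t$, so that $G_0 t = kS_0$ by part~\ref{cor:transforming_orbits:transform_of_S_0}. To make sense of the operation $h \cdot (kS_0) := (hk)S_0$ I would first determine when two such orbits agree: $kS_0 = k'S_0$ iff $k's_0 \in G_0(ks_0)$ iff $(k^{-1}k')s_0 \in S_0$ iff $k^{-1}k' \in H_0$, i.e.\ iff $kH_0 = k'H_0$. Hence $k \mapsto kS_0$ factors through a bijection between the left coset space $H/H_0$ and the set of $G_0$-orbits, under which the proposed operation becomes ordinary left translation $k'H_0 \mapsto (hk')H_0$; well-definedness, the action axioms, and transitivity of this translation action are then standard.

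The only step that needs a moment of care is this last identification in part~(iv): verifying that the fibers of $k \mapsto kS_0$ are exactly the left cosets of $H_0$, which is what makes $h(kS_0) = (hk)S_0$ well defined. Everything else reduces to a one-line computation with the commutation relation between $G$ and $H$ and the definition of $H_0$.
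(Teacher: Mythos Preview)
Your argument is correct and matches the paper's proof essentially line for line in parts (i)--(iii): both use the commutation of $G$ with $H$ for (i), the conjugation computation for (ii), and a direct appeal to Theorem~\ref{thm:sub_dual_groups}\ref{thm:sub_dual_groups:restricted_subgroups_dual} for (iii).

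The only place you diverge is part (iv), where you do more work than necessary. You treat $h\cdot(kS_0):=(hk)S_0$ as a definition requiring a well-definedness check via the identification with $H/H_0$. But $h(kS_0)=(hk)S_0$ already holds literally as an equality of subsets of $S$, since $h(kS_0)=\{h(ks):s\in S_0\}=\{(hk)s:s\in S_0\}=(hk)S_0$; so $H$ is simply acting on subsets of $S$ in the obvious way, and part~(i) guarantees this action preserves the collection of $G_0$-orbits. Transitivity then follows, as the paper notes, directly from the transitivity of $H$ on $S$ together with~(i). Your coset argument is a correct alternative and yields the pleasant extra information that the $G_0$-orbits are in bijection with $H/H_0$, but it is not needed for the statement as given.
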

\begin{proof}
\begin{enumerate}
\item
The element $k\in H$, commutes with each element in $G_0$, so $G_0ks_0=kG_0s_0=kS_0$.
\item
If $kh_0k^{-1}\in kH_0k^{-1}$, then $kh_0k^{-1}(ks_0)=kh_0s_0\in k S_0$ since $h_0 s_0 \in S_0$. On the other hand, if $h \in H$ and $hks_0=gks_0$ for some $g \in G_0$, then $k^{-1}h k s_0 = gs_0$ and $k^{-1} h k \in H_0$, so $h \in k H_0 k^{-1}$.
\item
This follows directly from Theorem~\ref{thm:sub_dual_groups}~\ref{thm:sub_dual_groups:restricted_subgroups_dual}.
\item
This follows directly from the transitivity of the $H$-action on $S$ and \ref{cor:transforming_orbits:transform_of_S_0}.
\end{enumerate}
\qed
\end{proof}

\subsection{Hexatonic Systems} \label{subsec:Hexatonic_Systems}
To see the utility of Theorem~\ref{thm:sub_dual_groups}, we construct the hexatonic systems of Cohn~\cite{cohn1996} and the dual pair of Clampitt~\cite{clampittParsifal}. In Theorem~\ref{thm:sub_dual_groups}, we take $S$ to be the set of major and minor triads, $G$ to be the neo-Riemannian $PLR$-group, $H$ to be the $T/I$-group, and $G_0$ to be the $PL$-group, which is generated by $P$ and $L$, is isomorphic to $S_3$, and therefore has only 6 elements.

Let $s_0$ be $E\flat$ (we write major chords with upper-case letters and minor chords with lower-case letters). Beginning with $E\flat$, and evaluating $P$ and $L$ alternately, we obtain the 6 chords $\{E\flat, e\flat, B, b, G,g\}$. We have now found the complete orbit $S_0$ of $E\flat$ and may stop: by Theorem~\ref{thm:sub_dual_groups}~\ref{thm:sub_dual_groups:G_0_simple_transitivity}, $G_0$ acts simply transitively on $S_0$, which means $|S_0|=|G_0|=6$ by the Orbit-Stabilizer Theorem.

To find $H_0$, we must only find those transpositions and inversions which map $E\flat$ into $S_0$, or in other words, we must only solve the six equations
$$\aligned
T_iE\flat &=E\flat &\hspace{1in} I_\ell E\flat &=e\flat \\
T_jE\flat &=G &  I_mE\flat&=g \\
T_kE\flat&=B& I_nE\flat&=b.\endaligned$$
Clearly, $i$, $j$, and $k$ are $0$, $4$, and $8$ respectively, and less clearly, $\ell$, $m$, and $n$ are 1, 5, and 9. In cycle notation, the group $G_0|_{S_0}$ is
\begin{center}
$G_0|_{S_0}:$ \hspace{.25in}
\begin{tabular}{ll}
$\text{Id}=()$ & $LP|_{S_0}=(E\flat\; G \;B)(e\flat \; b \; g)$\\
$P|_{S_0}=(E\flat\;e\flat)(G\;g)(B\;b)$ \;\;\;\;\;\; & $PL|_{S_0}=(E\flat\;B\;G)(e\flat\;g\;b)$\\
$L|_{S_0}=(E\flat\;g)(G\; b)(B \;e\flat)$ \;\;\;\;\;\; & $PLP|_{S_0}=(E\flat \; b)(G \; e\flat)(B\;g)$
\end{tabular}
\end{center}
and the group $H_0|_{S_0}$ is
\begin{center}
$H_0|_{S_0}:$ \hspace{.25in}
\begin{tabular}{ll}
$T_0|_{S_0}=()$ & $I_1|_{S_0}=(E\flat\; e\flat)(G\; b)(B \; g)$\\
$T_4|_{S_0}=(E\flat\;G\;B)(e\flat\;g\;b)$ \;\;\;\;\;\; & $I_5|_{S_0}=(B\;b)(G\;e\flat)(E\flat\;g)$\\
$T_8|_{S_0}=(E\flat\;B\;G)(e\flat\;b\;g)$ \;\;\;\;\;\; & $I_9|_{S_0}=(G \; g)(E\flat\;b)(B\;e\flat),$
\end{tabular}
\end{center}
exactly as in Clampitt's article \cite{clampittParsifal}.
The groups $G_0|_{S_0}$ and $H_0|_{S_0}$ are dual in $\text{Sym}(S_0)$ by Theorem~\ref{thm:sub_dual_groups}~\ref{thm:sub_dual_groups:restricted_subgroups_dual}.

We may now use Corollary~\ref{cor:transforming_orbits} and the above computation to find the other three hexatonic systems and the corresponding duals to the restricted $PL$-groups. In Corollary~\ref{cor:transforming_orbits}, we take $k=T_1$, $T_2$, and $T_3$. Starting with $T_1$, we find the $PL$-orbit
$$T_1S_0=\{E,e,C,c,A\flat,a\flat\}$$
and (extended) dual group
$$T_1H_0T_1^{-1}=\{T_0,T_4,T_8,I_3,I_7,I_{11}\}.$$
With $T_2$ we obtain the $PL$-orbit
$$T_2S_0=\{F,f,C\sharp,c\sharp,A,a\}$$
and (extended) dual group
$$T_2H_0T_2^{-1}=\{T_0,T_4,T_8,I_5,I_9,I_{1}\}.$$
With $T_3$ we obtain the $PL$-orbit
$$T_3S_0=\{G\flat,f\flat,D,d,B\flat,b\flat\}$$
and (extended) dual group
$$T_2H_0T_2^{-1}=\{T_0,T_4,T_8,I_7,I_{11},I_{3}\}.$$

We know we have all orbits and may now stop because the union of the orbits thus far gives us all of $S$. For a direct computation of all four hexatonic systems, see the exposition \cite{oshita}.

\subsection{Octatonic Systems} \label{subsec:Octatonic_Systems}
As another application of Theorem~\ref{thm:sub_dual_groups} and Corollary~\ref{cor:transforming_orbits}, we construct the three octatonic systems. In Theorem~\ref{thm:sub_dual_groups}, we take $S$ to be the set of major and minor triads, $G$ to be the neo-Riemannian $PLR$-group, $H$ to be the $T/I$-group, and $G_0$ to be the $PR$-group, which is generated by $P$ and $R$, is isomorphic to $D_4$, and therefore has only 8 elements.\footnote{To see that the $PR$-group is dihedral of order 8, we note that alternately applying $P$ and $R$ to $C$ gives $C,c,E\flat,e\flat,G\flat,g\flat,A,a,C$, so that $s:=RP$ has order 4 (recall $P$ and $R$ commute with $T_1$). The element $t:=P$ has order 2, and the equation $tst=s^{-1}$ holds true. Thus the $PR$-group is generated by $\{s,t\}$ and $s$ and $t$ satisfy the relations of $D_4$.}

Let $s_0$ be the $C$ chord. Beginning with $C$, and evaluating $P$ and $R$ alternately, we obtain the 8 chords $\{C,c,E\flat,e\flat,G\flat,g\flat,A,a\}$. We have now found the complete orbit $S_0$ of $C$ and may stop: by Theorem~\ref{thm:sub_dual_groups}~\ref{thm:sub_dual_groups:G_0_simple_transitivity}, $G_0$ acts simply transitively on $S_0$, which means $|S_0|=|G_0|=8$ by the Orbit-Stabilizer Theorem.

To find $H_0$, we must only find those transpositions and inversions which map $C$ into $S_0$, or in other words, we must only solve the eight equations
$$\aligned
T_iC&=C & \hspace{1in} I_m C&=c \\
T_jC&=E\flat &   I_nC&=e\flat \\
T_kC&=G\flat &   I_oC&=g\flat \\
T_{\ell}C&=A &   I_pC&=a.
\endaligned$$
Clearly, $i$, $j$, $k$, and $\ell$ are $0$, $3$, $6$, and $9$ respectively, and with a little more work, $m$, $n$, $o$, and $p$ are 7, 10, 1, and 4.
The groups $G_0|_{S_0}$ and $H_0|_{S_0}$ are dual in $\text{Sym}(S_0)$ by Theorem~\ref{thm:sub_dual_groups}~\ref{thm:sub_dual_groups:restricted_subgroups_dual}.

We may now use Corollary~\ref{cor:transforming_orbits} and the above computation to find the other two octatonic systems and the corresponding duals to the restricted $PR$-groups. In Corollary~\ref{cor:transforming_orbits}, we take $k=T_1$ and $T_2$. Starting with $T_1$, we find the $PR$-orbit
$$T_1S_0=\{D\flat,d\flat,E,e,G,g,B\flat,b\flat\}$$
and (extended) dual group
$$T_1H_0T_1^{-1}=\{T_0,T_3,T_6,T_9,I_9,I_0,I_3,I_6\}.$$
With $T_2$ we obtain the $PR$-orbit
$$T_2S_0=\{D,d,F,f,A\flat,a\flat,B,b\}$$
and (extended) dual group
$$T_2H_0T_2^{-1}=\{T_0,T_3,T_6,T_9,I_{11},I_2,I_{5},I_8\}.$$

We know we have all orbits and may now stop because the union of the orbits thus far gives us all of $S$.  For a direct computation of all three octatonic systems, see \cite{oshita}.

\section{The Topos of Triads and the neo-Riemannian $PLR$-Group} \label{sec:topos_of_triads}

We first recall the topos of triads $\mathbf{Set}^{\mathcal{T}}$ from \cite{nollToposOfTriads} and then present the enumeration. In the present article we use the semi-tone encoding of pitch classes, and not the circle-of-fifths encoding used in \cite{nollToposOfTriads}. Consequently, some formulas will differ, though the objects under consideration are exactly the same.

The {\it triadic monoid} $\mathcal{T}$ of \cite{nollToposOfTriads} is the collection of affine maps $\mathbb{Z}_{12} \to \mathbb{Z}_{12}$ which preserve the $C$-triad $\{0,4,7\}$ as a set. A map $\psi \colon\mathbb{Z}_{12} \to \mathbb{Z}_{12}$ is {\it affine} if it has the form $z \mapsto mz+b$ for some $m, b \in \mathbb{Z}_{12}$. There are 144 affine maps $\mathbb{Z}_{12} \to \mathbb{Z}_{12}$, the invertible ones are characterized by $m \in \{1,5,7,11\}$. A function $\psi$ {\it preserves $\{0,4,7\}$ as a set} if $\psi(\{0,4,7\}) \subseteq \{0,4,7\}$. This is weaker than preserving $\{0,4,7\}$ {\it pointwise}. The set $\mathcal{T}$ forms a {\it monoid} under function composition, that is, function composition is an associative, unital operation on $\mathcal{T}$ (functions in $\mathcal{T}$ are not necessarily invertible). The triadic monoid $\mathcal{T}$ has 8 elements and is generated by the affine maps $f(z):=3z+7$ and $g(z):=8z+4$. The other maps in $\mathcal{T}$ are $f\circ f(z)=9z+4$, $g\circ g(z)=4z$,  the three constant maps 0, 4, 7, and the identity $\text{Id}_{\mathbb{Z}_{12}}$.

If $\mathcal{S}\subseteq \mathbb{Z}_{12}$ is closed under the action of $\mathcal{T}$, that is $\mathcal{T}\mathcal{S} \subset \mathcal{S}$, then $\mu\vert_\mathcal{S}\colon \mathcal{T} \times \mathcal{S} \to \mathcal{S}$ is called a {\it subaction of $\mu$} and $\mathcal{S}$ is called the {\it carrier set of the subaction $\mu|_\mathcal{S}$}.

The {\it topos of triads} is the category $\mathbf{Set}^{\mathcal{T}}$, its objects are sets equipped with a $\mathcal{T}$-action and its morphisms are $\mathcal{T}$-equivariant maps. Like any presheaf category, this category is a {\it topos}. See \cite{nollToposOfTriads} for a complete investigation of the triadic monoid and the topos of triads. An exposition of \cite{nollToposOfTriads} may be found in \cite{bartlett}. See the Appendix of the present paper for a recollection of topos-theoretic concepts used in this article.

Our aim in the present article is to clarify the relationship between the neo-Riemannian $PLR$-group, the triadic monoid, and the topos of triads, so we focus on subactions of the natural action $\mu\colon \mathcal{T} \times \mathbb{Z}_{12} \to \mathbb{Z}_{12}$ which are covered by consonant triads. A tremendous difference between the triadic monoid action and the $PLR$-group action is that the triadic monoid acts on pitch classes, whereas the $PLR$-group acts on aggregate chords; the $PLR$-group simply {\it does not act on individual pitch classes}. However, we may use Theorem~\ref{thm:sub_dual_groups} to induce an action of certain transpositions and inversions on the pitch classes of a monoid subaction in the case that the carrier set is covered by an orbit of a $PLR$-subgroup.

For example, consider the major-minor mixture $\{0,3,4,7\}$. A quick computation verifies that $\mathcal{T} \{0,3,4,7\} \subseteq \{0,3,4,7\}$, as it suffices to check if the two generators $f$ and $g$ of $\mathcal{T}$ preserve $\{0,3,4,7\}$. The carrier set $\{0,3,4,7\}$ of this monoid subaction is covered by the consonant triads $C$ and $c$, while $\{C,c\}$ is an orbit of the $PLR$-subgroup $\{\text{Id},P\}$. Theorem~\ref{thm:sub_dual_groups} now implies that the dual subgroup in $\text{Sym}(\{C,c\})$ is $\{\text{Id},I_7\}$. The identity and the inversion operator $I_7$ {\it do act on individual pitch classes}, and since $\text{Id}$ and $I_7$ preserve the set $\{C,c\}$, they will also preserve the set of underlying pitch classes. This is how we move from $PLR$-subgroup actions on chords to $T/I$-subgroup actions on pitch classes. It is a coincidence in this very small example that $I_7$ and $P$ agree on the set $\{C,c\}$.

We may now enumerate all $\mathcal{T}$-sets $\mathcal{S}$ which are covered by triads and admit an action of a $PLR$-subgroup on the maximal cover $\mathbb{S}$, see Figure~\ref{figure:enumeration} and Figure~\ref{figure:carrier_sets}. In Figure~\ref{figure:enumeration}, the operation $Q_6$ transposes major chords up by 6 semitones and minor chords down by 6 semitones (this is the same as $T_6$, but we call it $Q_6$ to emphasize that we think of it as element of the $PLR$-group). The notation S$\ell$ indicates the {\it slide transformation} on major and minor chords, which holds the third constant and moves the root and the fifth in parallel by a semitone.

\begin{theorem}[Enumeration] \label{thm:triadic_monoid_and_PLR_subgroups}
Let $\mathcal{T}$ be the triadic monoid, that is, $\mathcal{T}$ is the monoid of affine functions $\mathbb{Z}_{12} \to \mathbb{Z}_{12}$ which preserve the $C$-chord $\{0,4,7\}$ as a set. Suppose $\mathcal{S} \subseteq \mathbb{Z}_{12}$ is closed under the natural $\mathcal{T}$-action in the sense that $\mathcal{T}\mathcal{S} \subseteq \mathcal{S}$. Assume further that $\mathcal{S}$ is covered by consonant triads, and let $\mathbb{S}$ be the collection of consonant triads $Y$ with $Y \subseteq \mathcal{S}$. If $\mathbb{S}$ admits a simply transitive action of a $PLR$-subgroup $G$, then $\mathcal{S}$ must be one of the sets enumerated in Figure~\ref{figure:enumeration}.
\end{theorem}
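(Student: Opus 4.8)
The plan is to reduce the enumeration to a finite search over the subgroups of the $PLR$-group and then to trim that search using the rigidity of $\mathcal{T}$-invariance. First I would show that $\mathcal{S}$ is completely determined by the acting subgroup. A simply transitive action has nonempty underlying set, so $\mathbb{S}\neq\emptyset$ and hence $\mathcal{S}\neq\emptyset$; since $\mathcal{T}$ contains the three constant maps with values $0$, $4$, $7$, any nonempty $\mathcal{T}$-closed subset of $\mathbb{Z}_{12}$ contains $\{0,4,7\}$, so the $C$-triad belongs to $\mathbb{S}$. The $PLR$-group acts simply transitively, hence freely, on all $24$ consonant triads, so every $PLR$-subgroup acts freely on $\mathbb{S}$; therefore the hypothesis that a $PLR$-subgroup $G$ act simply transitively on $\mathbb{S}$ forces $\mathbb{S}$ to be a single $G$-orbit, and since $\{0,4,7\}\in\mathbb{S}$ this orbit is $G\cdot\{0,4,7\}$, with $|\mathbb{S}|=|G|$ exactly as in the hexatonic and octatonic computations via Theorem~\ref{thm:sub_dual_groups}~\ref{thm:sub_dual_groups:G_0_simple_transitivity}. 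Because $\mathcal{S}$ is covered by the consonant triads it contains and those are precisely the members of $\mathbb{S}$, we obtain $\mathcal{S}=\bigcup_{Y\in G\cdot\{0,4,7\}}Y=:\mathcal{S}_G$. Thus $\mathcal{S}$ is a function of $G$ alone, and the only remaining content of the hypothesis is that $\mathcal{S}_G$ be $\mathcal{T}$-closed; the theorem therefore reduces to running through the subgroups $G\leq PLR$, forming $\mathcal{S}_G$, and retaining those with $\mathcal{T}\mathcal{S}_G\subseteq\mathcal{S}_G$ — these are the rows of Figure~\ref{figure:enumeration}.

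Next I would carry out that search. The group $PLR\cong D_{12}$ has a sizeable subgroup lattice, but two observations keep the work short. First, by freeness the orbit $G\cdot\{0,4,7\}$ recovers $G$ (if $g\cdot\{0,4,7\}=g'\cdot\{0,4,7\}$ then $g=g'$), so distinct subgroups give distinct data with nothing to de-duplicate. Second, $\mathcal{T}$-invariance is extremely restrictive: a one-line computation shows both generators collapse every consonant triad, namely $f(z)=3z+7$ sends a triad with root $n$ to the two-element set $\{3n+4,\,3n+7\}$ and $g(z)=8z+4$ sends it to $\{8n,\,8n+4\}$, so $\mathcal{S}_G$ being $\mathcal{T}$-closed forces particular pitch classes into $\mathcal{S}_G$ as soon as $\mathbb{S}$ contains a triad whose root is $\not\equiv 0\pmod{4}$ or $\not\equiv 0\pmod{3}$. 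Organizing the subgroups by $|G|\in\{1,2,3,4,6,8,12,24\}$ and applying these necessary conditions leaves only a few candidates, dispatched by hand: $G=\{\mathrm{Id}\}$ gives $\mathcal{S}=\{0,4,7\}$; the order-two subgroup $\{\mathrm{Id},P\}$ gives the major--minor mixture $\{0,3,4,7\}$, and the other low-order rows come from small subgroups built from $P$, $Q_6$, and the slide transformation $\mathrm{S}\ell$; the $PL$- and $PR$-group computations of Sections~\ref{subsec:Hexatonic_Systems} and~\ref{subsec:Octatonic_Systems} supply the $|G|=6$ and $|G|=8$ rows (the hexatonic and octatonic carriers); and $G=PLR$ gives $\mathcal{S}=\mathbb{Z}_{12}$.

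The hard part is just the bookkeeping of this case analysis, and the one subtlety to watch is that for each retained $G$ one must verify that the consonant triads contained in $\mathcal{S}_G$ are \emph{exactly} $G\cdot\{0,4,7\}$, so that the maximal triadic cover of $\mathcal{S}_G$ really is $\mathbb{S}$ and the tabulated subgroup is the correct one; this is what rules out the cyclic rotation subgroups of orders $3$, $4$, and $6$ (and certain reflection subgroups), whose carrier sets happen to coincide with the hexatonic, octatonic, or chromatic sets but whose orbits are too small to be the full cover. Corollary~\ref{cor:transforming_orbits} is handy here for transporting the data of one orbit over to its translates. As an alternative organization one could instead start from the known classification of $\mathcal{T}$-invariant subsets of $\mathbb{Z}_{12}$ in \cite{nollToposOfTriads}, discard those that are not unions of consonant triads, and test the orbit condition on the remainder; either route is a finite, and after the two reductions above short, computation.
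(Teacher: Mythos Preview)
Your proposal is correct and shares the paper's overall strategy---reduce to a finite case analysis over subgroups $G\leq PLR$ containing $C$ in their orbit---but the organization and tools differ in a way worth noting. You begin with a cleaner reduction: using the constant maps $0,4,7\in\mathcal{T}$ to force $C\in\mathbb{S}$, you conclude $\mathbb{S}=G\cdot C$ and $\mathcal{S}=\bigcup G\cdot C$, so that $\mathcal{S}$ is a function of $G$ alone; the paper leaves this step implicit. You then sort by $|G|$ and prune using the pleasant observation that $f$ and $g$ collapse every consonant triad to a two-element set, turning $\mathcal{T}$-closure into easily checked pitch-class constraints. The paper instead splits on whether $P\in G$: for $P\in G$ it runs through $\langle P,Q_i\rangle$ with $Q_i$ a generator of a cyclic subgroup of $\langle Q_1\rangle$; for $P\notin G$ it argues structurally via the dual $T/I$-subgroup $H_0$ of Theorem~\ref{thm:sub_dual_groups}, showing $3,5,9\notin\mathcal{S}$ and hence that $H_0\subseteq\{\mathrm{Id},T_6,I_2,I_8\}$, which pins down the two tritone-mixture rows without enumerating reflection subgroups. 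Your route is more uniform and self-contained; the paper's Case~2 is a nice advertisement for the sub-dual-group machinery, trading a longer subgroup search for a short argument at the pitch-class level. Both must (and do) handle the subtlety you flag, that the maximal cover of $\mathcal{S}_G$ may properly contain $G\cdot C$, which eliminates $\langle P,Q_6\rangle$ and $\langle P,Q_2\rangle$ in the paper's list and the cyclic rotation subgroups in yours.
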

\begin{figure}
\begin{center}
\begin{tabular}{|l|c|c|c|}
\hline
Carrier Set $\mathcal{S}$ & Type & Maximal Covering $\mathbb{S}$ & $PLR$-Subgroup \\ \hline
$\{0,4,7\}$ & Major Chord & $C$ & $\{\text{Id}\}$ \\ \hline
$\{0,3,4,7\}$ & Major-Minor Mixture & $C,c$ & $\{\text{Id},P\}$ \\ \hline
$\{0,3,4,7,8,11\}$ & Hexatonic & $C,c,A\flat,a\flat,E,e$ & $\langle P,L \rangle$ \\ \hline
$\{0,1,3,4,6,7,9,10\}$ & Octatonic & $C,c,E\flat,e\flat,G\flat,g\flat,A,a$ & $\langle P,R \rangle$ \\ \hline
$\{0,1,4,6,7,10\}$ & Major Triad Tritone Mixture & $C,G\flat$ & $\{\text{Id},Q_6\}$ \\ \hline
$\{0,1,2,4,6,7,8,10\}$ & Prometheus Tritone Mixture & $C,d\flat,G\flat,g$ & $\{\text{Id},Q_6,\text{S}\ell,Q_6\text{S}\ell\}$ \\ \hline
$\mathbb{Z}_{12}$ & Chromatic Scale & All Consonant Triads & $PLR$-group \\ \hline
\end{tabular}
\end{center}
\caption{Enumeration of all $\mathcal{T}$-Subactions with Maximal Cover Admitting a Simply Transitive Action of a $PLR$-Subgroup} \label{figure:enumeration}
\end{figure}

\begin{figure}
\begin{center}
\begin{tabular}{|l|l|}
\hline
Carrier Set $\mathcal{S}$ in $\mathbb{Z}_{12}$ & Carrier Set $\mathcal{S}$ in Musical Notation \\ \hline
$\{0,4,7\}$ & $\{C,E,G\}$ \\ \hline
$\{0,3,4,7\}$ & $\{C,E\flat,E,G\}$ \\ \hline
$\{0,3,4,7,8,11\}$  & $\{C,E\flat,E,G,A\flat,B\}$ \\ \hline
$\{0,1,3,4,6,7,9,10\}$ & $\{C,D\flat,E\flat,E,G\flat,G,A,B\flat\}$ \\ \hline
$\{0,1,4,6,7,10\}$ & $\{C,D\flat,E,G\flat,G,B\flat\}$ \\ \hline
$\{0,1,2,4,6,7,8,10\}$ & $\{C,D\flat,D,E,G\flat,G,A\flat,B\flat\}$ \\ \hline
$\mathbb{Z}_{12}$ & $\{C,D\flat,D,E\flat,E,F,G\flat,G,A\flat,A,B\flat,B\}$ \\ \hline
\end{tabular}
\end{center}
\caption{Carrier Sets of Figure~\ref{figure:enumeration}} \label{figure:carrier_sets}
\end{figure}
\begin{proof}
Suppose $G$ is a $PLR$-subgroup which acts simply transitively on $\mathbb{S}$.

\noindent {\bf Case 1: $G$ contains $P$.} Assume $P \in G$. Our case-by-case method is to consider the orbit of $C=\{0,4,7\}$ under each $PLR$-subgroup which contains $P$, and to test if the underlying pitch-class set is closed under $\mathcal{T}$. By Theorem~\ref{thm:sub_dual_groups}~\ref{thm:sub_dual_groups:G_0_simple_transitivity}, each $PLR$-subgroup acts simply transitively on its $C$-orbit.

We may quickly determine all $PLR$-subgroups containing $P$ via the explicit structure of the $PLR$-group, described for instance in~\cite{cransfioresatyendra} and \cite{fioresatyendra2005}. The $PLR$-group has two generators, $Q_1$ of order 12 and $P$ of order 2, where $Q_1$ transposes major chords up by $1$ and minor chords down by $1$. The bijection $Q_k$ is defined similarly, and satisfies $Q_k=(Q_1)^{k}$. The 24 elements of the $PLR$-group are $\{Q_k|k \in \mathbb{Z}_{12}\}\cup \{PQ_k|k \in \mathbb{Z}_{12}\}$, for example $L=PQ_4$ and $R=PQ_9$. For these reasons, any $PLR$-subgroup containing $P$ is generated by $P$ and some $Q_i$. The group $\{Q_k|k \in \mathbb{Z}_{12}\}$ is cyclic, hence all of its subgroups are cyclic, and for our computation of the $PLR$-subgroups containing $P$ it suffices to check $\langle P, Q_i \rangle$ where $Q_i$ is a generator of a cyclic subgroup, that is, $Q_i \in \{\text{Id},Q_1,Q_2,Q_3,Q_4,Q_6\}$.

The smallest $PLR$-subgroup containing $P$ is $\{\text{Id},P\}$, which produces the $C$-orbit $\{C,c\}$ with underlying pitch-class set $\{0,3,4,7\}$. This is closed under the $\mathcal{T}$-action, as $f(3)=4=g(3)$.

The next larger subgroup is $\langle P,Q_6\rangle=\{\text{Id},Q_6,P,PQ_6\}$, which produces the $C$-orbit $\{C,c, G\flat,g\flat\}$ with underlying pitch-class set $\{0,1,3,4,6,7,9,10\}$, the octatonic. The group $\langle P,Q_6\rangle$ does not act simply transitively on the set of all consonant triads in the octatonic, so a hypothesis of the theorem is not satisfied, and there is nothing to check.

We arrive next at $\langle P, Q_4 \rangle=\{\text{Id},Q_4,Q_8,P,PQ_4,PQ_8\}$, which is the same as $\langle P,PQ_4\rangle=\langle P,L\rangle$. The $C$-orbit is a hexatonic system of Cohn $\{C,c,A\flat,a\flat,E,e\}$, see~\cite{cohn1996} and Section~\ref{subsec:Hexatonic_Systems} of the present paper. The underlying pitch-class set $\{0,3,4,7,8,11\}$ is closed under $\mathcal{T}$, since $f(8)=7$, $f(11)=4$, $g(8)=8$, and $g(11)=8$.

The group $\langle P, Q_3 \rangle=\{\text{Id},Q_3,Q_6,Q_9,P,PQ_3,PQ_6,PQ_9\}$ is the same as $\langle P, PQ_9\rangle=\langle P, R\rangle$. The $C$-orbit is an octatonic system $$\{C,c,E\flat,e\flat,G\flat,g\flat,A,a\},$$ see Section~\ref{subsec:Octatonic_Systems}. The underlying pitch-class set $\{0,1,3,4,6,7,9,10\}$ is closed under $\mathcal{T}$, since $f(1)=10$, $f(6)=1$, $f(9)=10$, $f(10)=1$, and $g(1)=0$, $g(6)=4$, $g(9)=4$, $g(10)=0$.

The group $\langle P, Q_2\rangle$ has $\mathbb{Z}_{12}$ as the underlying pitch-class set for its $C$-orbit because $\{0,3,4,7\}$ has a representative from each coset $\mathbb{Z}_{12}/6\mathbb{Z}_{12}$. That is, iterated applications of $Q_2$ to $\{0,4,7\}$ and $\{0,3,7\}$ will reach all of $\mathbb{Z}_{12}$. However, the group $\langle P,Q_2 \rangle$ does not act simply transitively on the set of all consonant triads in $\mathbb{Z}_{12}$, so a hypothesis of the theorem is not satisfied, and there is nothing to check.

The group $\langle P, Q_1 \rangle$ is the entire $PLR$-group, and its $C$-orbit is the entire collection of major and minor triads. Its underlying pitch-class set is of course $\mathbb{Z}_{12}$, which is clearly closed under the $\mathcal{T}$-action.

We have considered all $PLR$-subgroups containing $P$ and determined which ones produce a $C$-orbit with underlying pitch-class set closed under the $\mathcal{T}$-action.

\noindent {\bf Case 2: $G$ does not contain $P$.} Assume $P \notin G$ and $G$ acts simply transitively on $\mathbb{S}$.

If $G$ is trivial, then the $C$-orbit is simply $\{C\}$ with underlying pitch-class set $\{0,4,7\}$, which is closed under the $\mathcal{T}$-action by definition of $\mathcal{T}$.

For the rest of Case 2, we assume $G$ is nontrivial and argue using the sub group $H \leq T/I$ which maps $C=\{0,4,7\}$ to triads in $\mathbb{S}$. The sub group $H$ was called $H_0$ in Theorem~\ref{thm:sub_dual_groups}, and is defined via the action on consonant triads. We will use the  $H$-action on individual pitch classes to conclude that the only two possibilities for $\mathcal{S}$ are the Major Triad Tritone Mixture $\{0,1,4,6,7,10\}$ and the Prometheus Tritone Mixture $\{0,1,2,4,6,7,8,10\}$ whenever $P \notin G$.

We first remark $3 \notin \mathcal{S}$, for if $3$ were in $\mathcal{S}$, then $\{0,3,7\}$ and $\{0,4,7\}$ would be in $\mathbb{S}$, which would necessitate $P \in G$. Based on this observation, we may conclude that the only transposition in $H$ is $T_6$, as follows. Since $3 \notin \mathcal{S}$, we have $T_3 \notin H$, and consequently $T_9 \notin H$. Similarly, $T_8 \notin H$, for if $T_8$ were in $H$, then $T_8\{0,4,7\}=\{8,0,3\}$ would be a subset of $\mathcal{S}$, a contradiction. Consequently, $T_2,T_4,T_{10} \notin H$. Clearly, $T_1,T_5,T_7,T_{11} \notin H$, for if any of these were in $H$, all transpositions would be in $H$. Thus, the only transposition that can be in $H$ is $T_6$. The transposition $T_6$ must be in $H$, since $H \leq T/I$ is nontrivial by Theorem~\ref{thm:sub_dual_groups}~\ref{thm:sub_dual_groups:H_0_simple_transitivity}, the composite of two inversions is a transposition, and the only allowable transposition is $T_6$.

At this point, we note that $\{\text{Id},T_6\}$ acts simply transitively on $\{C,G\flat\}$, and its dual group $\{\text{Id},Q_6\}$, determined by Theorem~\ref{thm:sub_dual_groups}, is a possibility for $G$. The underlying pitch-class set is the Major Triad Tritone Mixture $\{0,1,4,6,7,10\}$, which is closed under the $\mathcal{T}$-action because $f(1)=10$, $f(6)=1$, $f(10)=1$, and $g(1)=0$, $g(6)=4$, $g(10)=0$, so the Major Triad Tritone Mixture is a possibility for $\mathcal{S}$.

Let us return to general $G$ with $P \notin G$. We claim $9 \notin \mathcal{S}$. Since $T_6 \in H$ and $H$ preserves $\mathbb{S}$, so also $\mathcal{S}$, we have $T_6\{0,4,7\}=\{6,10,1\}=G\flat \subseteq \mathcal{S}$. If $9$ were in $\mathcal{S}$, then $g\flat=\{6,9,1\}$ would be a subset of $\mathcal{S}$, which would necessitate $P \in G$, a contradiction. Thus $9 \notin \mathcal{S}$.

We also claim $5 \notin \mathcal{S}$. If $5$ were in $\mathcal{S}$, then $f\circ f(5)=1$ and $g(5)=8$ must also belong to $\mathcal{S}$. Then we have the parallel triads
$\{1,4,8\}$ and $\{1,5,8\}$ both in $\mathbb{S}$, which is not allowed by $P \notin G$. Hence, $5 \notin \mathcal{S}$.

Suppose for the remainder of the proof that $H$ properly contains $\{\text{Id},T_6\}$. Our next goal is to show that the other elements of $H$ are exactly $I_2$ and $I_8$. The other elements of $H$ must be inversions, as the only transposition in $H$ is $T_6$ as we proved above. Recall that the inversion $I_{k+\ell}$ is the unique inversion which interchanges $k$ and $\ell$. Since $3,5,9 \notin \mathcal{S}$ as we saw above, we can exclude from $H$ all inversions that interchange 0, 4, or 7 with 3, 5, or 9. Thus, $I_3,I_7,I_{10},I_{5},I_9,I_0,I_9,I_1,I_4 \notin H$. The composite of $T_6$ with any of these cannot be in $H$, so we also have $I_{11}, I_6\notin H$. The only two inversions that can be in $H$ are $I_2$ and $I_8$. But if either is in $H$, then so is the other, so both $I_2$ and $I_8$ must be in $H$.  Thus $H=\{\text{Id},T_6,I_2,I_8\}$.

At this point, we note that $\{\text{Id},T_6,I_2,I_8\}$ acts simply transitively on the chord set $\{C,d\flat,G\flat,g\}$ and the dual subgroup is $\{\text{Id},Q_6,\text{S}\ell,Q_6\text{S}\ell\}$ by Theorem~\ref{thm:sub_dual_groups}. The underlying pitch-class set $\{0,1,2,4,6,7,8,10\}$ is closed under the $\mathcal{T}$ action, since $f(1)=10$, $f(2)=1$, $f(6)=1$, $f(8)=7$, $f(10)=1$, and $g(1)=0$, $g(2)=8$, $g(6)=4$, $g(8)=8$, $g(10)=0$. We conclude that the Prometheus Tritone Mixture $\{0,1,2,4,6,7,8,10\}$ is the final possibility for $\mathcal{S}$. \qed
\end{proof}

We may now consider upgrades by the Lawvere--Tierney topologies $$j_\mathcal{P},j_\mathcal{L},j_\mathcal{R} \colon \Omega \to \Omega$$ in the topos $\mathbf{Set}^\mathcal{T}$, found on page 117 of \cite{nollToposOfTriads}. Recall that if $\nu$ is a sub $\mathcal{T}$-action of a $\mathcal{T}$-action $\sigma$ and its characteristic morphism is $\chi\colon \sigma \to \Omega$, then the {\it $j$-upgrade} of $\nu$ has carrier set equal to the pre-image of the point $\mathcal{T} \in \Omega$ under $j \circ \chi$, that is, the $j$-upgrade has carrier set $(j \circ \chi)^{-1}(\mathcal{T})$.

\begin{corollary}[Maximal covers of Lawvere--Tierney upgrades admit $PLR$-subgroup actions]
Let $\varphi\colon \mathbb{Z}_{12} \to \mathbb{Z}_{12}$ be an element of the $T/I$-group and let $(\mathbb{Z}_{12},\mu^\varphi)$ denote the $\mathcal{T}$-action given by the $\varphi$-conjugation of the natural $\mathcal{T}$-action, that is, the generators $f$ and $g$ of $\mathcal{T}$ act by $\varphi f \varphi^{-1}$ and $\varphi g \varphi^{-1}$. Then the maximal covers of the $j_\mathcal{P}$-, $j_\mathcal{L}$-, and $j_\mathcal{R}$-upgrades of the subaction $(\varphi(C),\mu^{\varphi}|_{\varphi(C)})$ admit simply transitive actions by $PLR$-subgroups. In fact, the upgrades and their maximal covers are the $\varphi$-images of the upgrades of $C$ and their respective covers. The $PLR$-subgroups remain the same. See Figure~\ref{figure:upgrades}.
\end{corollary}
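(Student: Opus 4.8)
The plan is to reduce everything to the case $\varphi=\mathrm{Id}$, which is already contained in Theorem~\ref{thm:triadic_monoid_and_PLR_subgroups}, and then to transport the conclusion along $\varphi$ using that $\varphi$ is an isomorphism in $\mathbf{Set}^{\mathcal{T}}$. First I would recall from page~117 of \cite{nollToposOfTriads} that the $j_\mathcal{P}$-, $j_\mathcal{L}$-, and $j_\mathcal{R}$-upgrades of the subaction $(C,\mu|_C)$ with $C=\{0,4,7\}$ have carrier sets $\{0,3,4,7\}$, $\{0,3,4,7,8,11\}$, and $\{0,1,3,4,6,7,9,10\}$ respectively; these are precisely rows two through four of Figure~\ref{figure:enumeration}, whose maximal covers $\{C,c\}$, $\{C,c,A\flat,a\flat,E,e\}$, $\{C,c,E\flat,e\flat,G\flat,g\flat,A,a\}$ carry simply transitive actions of the $PLR$-subgroups $\{\mathrm{Id},P\}$, $\langle P,L\rangle$, $\langle P,R\rangle$. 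So the corollary holds for $\varphi=\mathrm{Id}$, and the work is to move this picture by $\varphi$.

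Next I would check that $\varphi\colon(\mathbb{Z}_{12},\mu)\to(\mathbb{Z}_{12},\mu^{\varphi})$ is an isomorphism in $\mathbf{Set}^{\mathcal{T}}$: for a generator, say $f$, and $z\in\mathbb{Z}_{12}$ one has $\mu^{\varphi}(f,\varphi z)=\varphi f\varphi^{-1}(\varphi z)=\varphi(fz)=\varphi(\mu(f,z))$, and $\varphi$ is a bijection. Hence $\varphi$ carries the subobject $(C,\mu|_C)\hookrightarrow(\mathbb{Z}_{12},\mu)$ to $(\varphi(C),\mu^{\varphi}|_{\varphi(C)})\hookrightarrow(\mathbb{Z}_{12},\mu^{\varphi})$ and carries characteristic morphisms to characteristic morphisms, so that $\chi_{\varphi(C)}=\chi_C\circ\varphi^{-1}$. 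Since the subobject classifier $\Omega$ of the presheaf topos $\mathbf{Set}^{\mathcal{T}}$, and therefore the three topologies $j_\mathcal{P},j_\mathcal{L},j_\mathcal{R}\colon\Omega\to\Omega$, are intrinsic to $\mathbf{Set}^{\mathcal{T}}$ and do not depend on a chosen object, and since the $j$-upgrade of a subaction with characteristic map $\chi$ has carrier set $(j\circ\chi)^{-1}(\mathcal{T})$, it follows that the carrier set of the $j$-upgrade of $(\varphi(C),\mu^{\varphi}|_{\varphi(C)})$ is $\varphi$ applied to the carrier set of the $j$-upgrade of $(C,\mu|_C)$, for each of the three $j$'s. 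This gives the ``in fact'' clause at the level of carrier sets.

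To finish, I would handle the maximal covers and the group. Because $\varphi\in T/I$ permutes the consonant triads, a consonant triad $Y$ lies in $\varphi(\mathcal{S})$ iff $\varphi^{-1}(Y)$ lies in $\mathcal{S}$, so the maximal cover of $\varphi(\mathcal{S})$ is $\varphi(\mathbb{S})$, the $\varphi$-image of the maximal cover $\mathbb{S}$ of $\mathcal{S}$. For the group, Construction~\ref{construction:dual_group} exhibits the $PLR$-group as the centralizer of the $T/I$-group in the symmetric group on consonant triads, so every element of the $PLR$-group commutes with $\varphi$. Thus, if $G\leq PLR$ acts simply transitively on $\mathbb{S}$ as in Step~1, then for $g\in G$ and $Y\in\mathbb{S}$ we have $g(\varphi(Y))=\varphi(g(Y))\in\varphi(\mathbb{S})$; hence $G$ preserves $\varphi(\mathbb{S})$ and $\varphi$ is a $G$-equivariant bijection from $\mathbb{S}$ to $\varphi(\mathbb{S})$, so the $G$-action on $\varphi(\mathbb{S})$ is again simply transitive, with the very same subgroup $G$.

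The only genuinely delicate point is the one invoked in the middle of Step~2: the invariance of the Lawvere--Tierney upgrade operation under the relabeling $\varphi$, i.e.\ that $j_\mathcal{P},j_\mathcal{L},j_\mathcal{R}$ are literally the same morphisms $\Omega\to\Omega$ whether one works with $(\mathbb{Z}_{12},\mu)$ or $(\mathbb{Z}_{12},\mu^{\varphi})$. This is automatic since $\Omega$ and the $j$'s are attributes of the topos $\mathbf{Set}^{\mathcal{T}}$ and not of a particular $\mathcal{T}$-set, but it should be phrased carefully, as $\varphi$ is only an isomorphism between two objects, not a topos automorphism. A reader wishing to bypass this could instead prove Step~2's conclusion by brute force, applying the explicit formulas for $j_\mathcal{P},j_\mathcal{L},j_\mathcal{R}$ from page~117 of \cite{nollToposOfTriads} directly to the subaction $(\varphi(C),\mu^{\varphi}|_{\varphi(C)})$; this is routine but longer.
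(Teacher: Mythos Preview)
Your proposal is correct and follows essentially the same route as the paper: both arguments identify $\chi_{\varphi(C)}=\chi_C\circ\varphi^{-1}$ (the paper via a composite of pullback squares, you via the observation that $\varphi$ is a $\mathcal{T}$-equivariant isomorphism), deduce that the $j$-upgrades are the $\varphi$-images of the upgrades of $C$, note that maximal covers transfer under $\varphi$, and then use the commutation of $PLR$ with $T/I$ to keep the same subgroup (the paper invokes Corollary~\ref{cor:transforming_orbits} for this last step, which is exactly your direct commutativity argument). Your treatment is somewhat more explicit about the equivariance of $\varphi$ and the ``delicate point'' that the $j$'s depend only on the topos, but the substance is the same.
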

\begin{figure}
\begin{center}
\begin{tabular}{|l|l|c|c|}
\hline
Upgrade & Carrier Set  & Type  & $PLR$-Subgroup \\ \hline
$j_\mathcal{P}$ & $\varphi\{0,3,4,7\}$ & Major-Minor Mixture &  $\langle P \rangle$ \\ \hline
$j_\mathcal{L}$ & $\varphi\{0,3,4,7,8,11\}$ & Hexatonic &  $\langle P,L \rangle$ \\ \hline
$j_\mathcal{R}$ & $\varphi\{0,1,3,4,6,7,9,10\}$ & Octatonic &  $\langle P,R \rangle$ \\ \hline
\end{tabular}
\end{center}
\caption{Upgrades of the Subaction $(\varphi(C),\mu^{\varphi}|_{\varphi(C)})$ by the Lawvere--Tierney Topologies $j_\mathcal{P}$, $j_\mathcal{L}$, and $j_\mathcal{R}$} \label{figure:upgrades}
\end{figure}
\begin{proof}
Let $\chi^C\colon \mathbb{Z}_{12} \to \Omega$ denote the characteristic morphism of the subaction $\{0,4,7\}$ of the natural $\mathcal{T}$-action on $\mathbb{Z}_{12}$. Then we have the following two pullbacks (see the Appendix in Section~\ref{sec:appendix} for the notion of pullback).
$$\xymatrix@C=3pc@R=3pc{\varphi(C) \ar[r]^-{\varphi^{-1}} \ar@{^{(}->}[d] \ar@{}[dr]|{\text{pullback}} & C \ar[r] \ar@{^{(}->}[d] \ar@{}[dr]|{\text{pullback}} & \{\mathcal{T}\} \ar@{^{(}->}[d] \\ \mathbb{Z}_{12} \ar[r]_-{\varphi^{-1}} & \mathbb{Z}_{12} \ar[r]_{\chi^C} & \Omega }$$
The composite of these two pullback squares is also a pullback, so the characteristic morphism of $(\varphi(C),\mu^{\varphi}|_{\varphi(C)})$ is $\chi^C \circ \varphi^{-1}$. Thus, the $j$-upgrade of $(\varphi(C),\mu^{\varphi}|_{\varphi(C)})$ is $\varphi$ of the $j$-upgrade of $(C,\mu^{\text{Id}})$, that is,
$$(j \circ \chi^{\varphi(C)})^{-1}(\mathcal{T})=(j \circ \chi^C \circ \varphi^{-1})^{-1}(\mathcal{T})=\varphi\left((j \circ \chi^C)^{-1}(\mathcal{T}) \right).$$
We recall from page 123 of \cite{nollToposOfTriads} that the $j_\mathcal{P}$-, $j_\mathcal{L}$-, and $j_\mathcal{R}$-upgrades of $C$ are the major-minor mixture, hexatonic, and octatonic appearing in Figure~\ref{figure:enumeration}. The maximal cover of the $\varphi$-image is clearly the $\varphi$-image of the maximal cover, so we are precisely in the situation of Corollary~\ref{cor:transforming_orbits}. Sections~\ref{subsec:Hexatonic_Systems}~and~\ref{subsec:Octatonic_Systems} further illustrate the point. \qed
\end{proof}

In future work, we will study triadic coverings of sets which are simultaneously invariant under multiple conjugated $\mathcal{T}$-actions. Another goal is to better understand the role of the three groups $\langle P \rangle$, $\langle P,L \rangle$, $\langle P,R \rangle$  within the framework of other groups which act simply transitively on triadic coverings.

\section{Appendix on Topos Theory} \label{sec:appendix}

We recall some basic notions from topos theory. For explanations, examples, and proofs we refer to the excellent textbook of Mac Lane--Moerdijk \cite{maclanemoerdijk}. Only a very small fraction of this Appendix is actually used in the present paper, so readers do not need to understand this Appendix in order to follow the present paper. A topos is a special kind of category, so we begin with categories.

A {\it category} $\mathbf{C}$ consists of a class $\text{Obj}\; \mathbf{C}$ of {\it objects}, a set $\text{Mor}_\mathbf{C}(A,B)$ of {\it morphisms from $A$ to $B$} for each pair of objects $A$ and $B$, and a {\it composition}
$$\xymatrix{\text{Mor}_\mathbf{C}(B,C) \times \text{Mor}_\mathbf{C}(A,B) \ar[r]^-{\circ} & \text{Mor}_\mathbf{C}(A,C)}$$
for each triple of objects $A$, $B$, and $C$. The composition is required to be {\it associative} and {\it unital}.
For instance, the category $\mathbf{Set}$ has sets as objects and functions as morphisms. An prominent example from Section~\ref{sec:topos_of_triads} is the category $\mathbf{Set}^\mathcal{T}$, in which an object is a set equipped with an action of the triadic monoid $\mathcal{T}$ and a morphism is a $\mathcal{T}$-equivariant function.
Groups and group homomorphisms form a third example of a category.

A {\it topos} is a category $\mathbf{C}$ such that the following three axioms hold.
\begin{enumerate}
\item
The category $\mathbf{C}$ admits all {\it finite limits}.
\item
The category $\mathbf{C}$ admits a {\it subobject classifier} $\ast \to \Omega$. This is an object $\Omega$
equipped with a universal monomorphism from the terminal object $\ast$ to $\Omega$, which is universal in the sense that
any other monomorphism is a pullback of it in a unique way. Often, one simply calls $\Omega$ the {\it subobject classifier}.
\item  \label{Cartesian_closedness}
The category $\mathbf{C}$ is {\it Cartesian closed}, that is, for any two objects $B$ and $C$ there is an object $C^B$ in $\mathbf{C}$ and for any object $A$ of $\mathbf{C}$ a bijection
$$\text{Mor}_\mathbf{C}(A \times B,C) \cong \text{Mor}_\mathbf{C}(A,C^B)$$
natural in $A$, $B$, and $C$.
\end{enumerate}
To understand these axioms, we consider how the category $\mathbf{Set}$ is a topos. This category admits all finite limits. For instance, the diagram $$\xymatrix{ & B \ar[d]^g \\  A \ar[r]_f & C}$$ has finitely many objects and morphisms, and $L:=\{(a,b) \in A \times B \;\vert \; f(a)=g(b)\}$ makes the diagram
\begin{equation} \label{equ:pullback_defn}
\xymatrix{L \ar[d]_{\text{pr}_1} \ar[r]^{\text{pr}_2} & B \ar[d]^g \\  A \ar[r]_f & C}
\end{equation}
commute, and moreover there is a unique morphism from $L$ to any analogous $L'$ in such a way the two triangles involving $L$, $L'$, $A$, and $B$ also commute. This $L$ is called a {\it limit} of the diagram, and the limit for this special diagram is called a {\it pullback}. The projection $\text{pr}_1$ is called a {\it pullback} of $g$ and the projection $\text{pr}_2$ is called a {\it pullback} of $f$.

For the discussion of the subobject classifier of $\mathbf{Set}$, we first point out that the monomorphisms in $\mathbf{Set}$ are precisely the injective maps. The subobject classifier in $\mathbf{Set}$ is the inclusion $\{1\} \hookrightarrow \{0,1\}$. If $D$ is a subset of $E$, then the inclusion $D \hookrightarrow E$ is a pullback of $\{1\} \hookrightarrow \{0,1\}$ in a unique way: there is only one function $\chi$ that makes the following diagram a pullback.
$$\xymatrix{ D \ar@{^{(}->}[d] \ar[r] &  \{1\} \ar@{^{(}->}[d] \\ E \ar[r]_{\chi} & \{0,1\} }$$
This function $\chi$ is necessarily the usual {\it characteristic function}, which takes the value 1 for inputs in $D$ and the value 0 for inputs in $E \backslash D$. Note that $\chi^{-1}(1)=D$.

For the Cartesian closedness of $\mathbf{Set}$, the object $C^B$ is defined to be the set of functions $B \to C$, that is, $C^B=\text{Mor}_{\mathbf{Set}}(B,C)$. The bijection in \ref{Cartesian_closedness} sends a function $f\colon A \times B \to C$ to the function $A \to C^B$ defined by $a \mapsto \left( b \mapsto f(a,b) \right)$. Thus, $\mathbf{Set}$ is a topos.

Many other examples of topoi are given by presheaf categories. A {\it presheaf} on a category $\mathbf{D}$ is a functor from the opposite category of $\mathbf{D}$ to $\mathbf{Set}$ (here $\mathbf{D}$ is not required to be a topos). For any category $\mathbf{D}$, the category of presheaves on $\mathbf{D}$ is a topos. In particular, if $G$ is a group or a monoid, then the category of $G$-sets is a topos (when $G$ is viewed as a category with one object, a presheaf on $G^{\text{op}}$ is the same as a set equipped with a $G$-action, and a morphism of presheaves is a $G$-equivariant map). In particular, for the triadic monoid $\mathcal{T}$, the category $\mathbf{Set}^\mathcal{T}$ of $\mathcal{T}$-sets and their morphisms is a topos.

In the topos $\mathbf{Set}^\mathcal{T}$, the underlying set of the subobject classifier $\Omega$ is the set of left ideals in $\mathcal{T}$. For instance, the full monoid $\mathcal{T}$ and the empty set $\emptyset$ are both left ideals in $\mathcal{T}$, so $\mathcal{T}, \emptyset \in \Omega$.
To obtain the other left ideals it is convenient to inspect a Cayley graph of $\mathcal{T}$ with respect to its generators $f$ and $g$.

\hspace{1 cm}

\begin{center}

\unitlength 1 cm
\begin{picture}(6,3)
\put(3,3){\makebox(0,0){$e$}}
\put(3,0.4){\makebox(0,0){$c$}}
\put(0,3){\makebox(0,0){$g$}}
\put(6,3){\makebox(0,0){$f$}}
\put(0,2.8){\vector(0,-1){2.2}}
\put(6,2.8){\vector(0,-1){2.2}}

\put(-0.07,0.6){\vector(0,1){2.2}}
\put(6.07,0.6){\vector(0,1){2.2}}

\put(1.5,1.7){\makebox(0,0){$b$}}
\put(4.5,1.7){\makebox(0,0){$a$}}
\put(5.7,2.9){\vector(-1,-1){1}}
\put(0.3,2.9){\vector(1,-1){1}}
\put(4.3,1.5){\vector(-1,-1){1}}
\put(1.7,1.5){\vector(1,-1){1}}
\put(5.7,0.5){\vector(-1,1){1}}
\put(0.3,0.5){\vector(1,1){1}}

\put(3.25,0.55){\vector(1,1){1}}
\put(2.74,0.55){\vector(-1,1){1}}

\put(0,0.4){\makebox(0,0){$g^{2}$}}
\put(6,0.4){\makebox(0,0){$f^{2}$}}
\put(3.2,3){\vector(1,0){2.5}}
\put(2.8,3){\vector(-1,0){2.5}}
\put(1.7,1.74){\vector(1,0){2.6}}
\put(4.3,1.67){\vector(-1,0){2.6}}

\scriptsize
\put(3, 1.95){\makebox(0,0){$\mathrm{g}$}}
\put(3, 1.45){\makebox(0,0){$\mathrm{f}$}}
\put(4.5, 3.25){\makebox(0,0){$\mathrm{f}$}}
\put(1.5, 3.25){\makebox(0,0){$\mathrm{g}$}}

\put(5.85, 1.7){\makebox(0,0){$\mathrm{f}$}}
\put(0.15, 1.7){\makebox(0,0){$\mathrm{g}$}}
\put(6.23, 1.7){\makebox(0,0){$\mathrm{f}$}}
\put(-0.23, 1.7){\makebox(0,0){$\mathrm{g}$}}

\put(5.3, 2.2){\makebox(0,0){$\mathrm{g}$}}
\put(0.7, 2.2){\makebox(0,0){$\mathrm{f}$}}
\put(5.3, 1.2){\makebox(0,0){$\mathrm{g}$}}
\put(0.7, 1.2){\makebox(0,0){$\mathrm{f}$}}
\put(4.0, 0.9){\makebox(0,0){$\mathrm{g}$}}
\put(2.35, 1.2){\makebox(0,0){$\mathrm{f}$}}
\put(3.70, 1.2){\makebox(0,0){$\mathrm{g}$}}
\put(2, 0.9){\makebox(0,0){$\mathrm{f}$}}
\normalsize
\end{picture}
\end{center}
The nodes labelled $e, f, f^2, g, g^2, a, b, c$ represent the $8$ elements of $\mathcal{T}$. The left ideals are node sets where no arrow starts that leads outside of this set. One may directly check by eye that there are four non-empty proper subsets $\mathcal{B} \subset \mathcal{T}$ satisfying this condition, namely $$\mathcal{C} = \{a, b, c\}, \, \,  \mathcal{L} = \{a, b, c, f, f^2\},\, \,  \mathcal{R} = \{a, b, c, g, g^2\}, \, \,  \mathcal{P} = \{a, b, c, f, f, ^2, g, g^2\}.$$ The $\mathcal{T}$-action on the subobject classifier $\Omega=\{\emptyset, \mathcal{C}, \mathcal{L}, \mathcal{R}, \mathcal{P}, \mathcal{T}\}$ is
$$m \cdot \mathcal{B}:=\{n \in \mathcal{T} \; \vert\; n \cdot m \in \mathcal{B}\}$$
for $m \in \mathcal{T}$ and $\mathcal{B} \in \Omega$,
and the universal monomorphism is $\{T\} \hookrightarrow\Omega$.

A {\it Lawvere--Tierney topology} in the topos $\mathbf{Set}^\mathcal{T}$ is a $\mathcal{T}$-equivariant map $\Omega \to \Omega$ such that
\begin{enumerate}
\item
$j(\mathcal{T})=\mathcal{T}$,
\item
$j \circ j = j$,
\item
$j(\frak{r}) \cap j(\frak{s})=j(\frak{r} \cap \frak{s})$ for all $\mathfrak{r}, \mathfrak{s} \in \Omega$.
\end{enumerate}
The six Lawvere--Tierney topologies of $\mathbf{Set}^\mathcal{T}$ are determined in Sections 2.3 and 2.4 of \cite{nollToposOfTriads}.
Given a sub $\mathcal{T}$-set $D$ of a $\mathcal{T}$-set $E$, one may {\it upgrade} it by a Lawvere--Tierney topology $j$ in the following way. By definition of subobject classifier, the inclusion $D \hookrightarrow E$ corresponds uniquely to a morphism $\chi \colon E \rightarrow \Omega$, its characteristic morphism. The {\it $j$-upgrade of $D$} is then the sub $\mathcal{T}$-set of $E$ which has characteristic morphism $j \circ \chi$. The underlying set of the upgrade is $(j \circ \chi)^{-1}(\mathcal{T})$.

For example, the characteristic morphism of the $C$-major chord $\{0,4,7\}$ in $\mathbb{Z}_{12}$ with the natural $\mathcal{T}$-action is
\begin{center}
\begin{tabular}{|c|c|c|c|c|c|c|c|c|c|c|c|c|}
\hline
$t$ & 0 & 1 & 2 & 3 & 4 & 5 & 6 & 7 & 8 & 9 & 10 & 11 \\ \hline
$\chi^{\{0,4,7\}}(t)$ & $\mathcal{T}$ & $\mathcal{R}$ & $\mathcal{C}$ & $\mathcal{P}$ & $\mathcal{T}$ & $\mathcal{C}$ & $\mathcal{R}$ & $\mathcal{T}$ & $\mathcal{L}$ & $\mathcal{R}$ & $\mathcal{R}$ & $\mathcal{L}$ \\ \hline
\end{tabular}
\end{center}
as can be determined by precomposing the characteristic morphism for $\chi^{\{0,1,4\}}$ on page 121 of \cite{nollToposOfTriads} with multiplication by 7. Using this, and the tables on pages 117 and 118 of \cite{nollToposOfTriads}, the upgrades of $\{0,4,7\}$ are found to be as follows.
\begin{center}
\begin{tabular}{|l|l|c|}
\hline
Upgrade & Carrier Set  & Type \\ \hline
$j_\mathcal{T}$ & $\{0,4,7\}$ & Major Chord \\ \hline
$j_\mathcal{P}$ & $\{0,3,4,7\}$ & Major-Minor Mixture  \\ \hline
$j_\mathcal{L}$ & $\{0,3,4,7,8,11\}$ & Hexatonic \\ \hline
$j_\mathcal{R}$ & $\{0,1,3,4,6,7,9,10\}$ & Octatonic \\ \hline
$j_\mathcal{C}$ & $\mathbb{Z}_{12}$ & Chromatic Scale \\ \hline
$j_\mathcal{F}$ & $\mathbb{Z}_{12}$ & Chromatic Scale \\ \hline
\end{tabular}
\end{center}
The numbers in this table appear to differ from those on page 123 in \cite{nollToposOfTriads} because we are using the semitone encoding in the present article, rather than the circle-of-fifths encoding. The two tables are related by multiplication with 7.

A surprising discovery of \cite{nollToposOfTriads} is that these music-theoretic collections are the Lawvere--Tierney upgrades of the $C$-major chord. One of the goals of the present paper is to thicken this music-theoretic meaning by way of the $PLR$-group in Theorem~\ref{thm:triadic_monoid_and_PLR_subgroups}.

\subsubsection*{Acknowledgements.}
Thomas M. Fiore thanks Ramon Satyendra for explaining David Clampitt's article \cite{clampittParsifal} to him in 2004.


\begin{thebibliography}{10}

\bibitem{cohn1997}
Cohn, R.:
\newblock Neo-{R}iemannian operations, parsimonious trichords, and their
  ``{T}onnetz'' representations.
\newblock Journal of Music Theory \textbf{41}(1) (1997)  1--66

\bibitem{fioresatyendra2005}
Fiore, T.M., Satyendra, R.:
\newblock Generalized contextual groups.
\newblock Music Theory Online \textbf{11}(3) (2005)

\bibitem{nollToposOfTriads}
Noll, T.:
\newblock The topos of triads.
\newblock In: Colloquium on {M}athematical {M}usic {T}heory. Volume 347 of
  Grazer Math. Ber.
\newblock Karl-Franzens-Univ. Graz, Graz (2005)  103--135

\bibitem{peckGeneralizedCommuting}
Peck, R.:
\newblock Generalized commuting groups.
\newblock Journal of Music Theory \textbf{54}(2) (2010)

\bibitem{cohn1996}
Cohn, R.:
\newblock Maximally smooth cycles, hexatonic systems, and the analysis of
  late-romantic triadic progressions.
\newblock Music Analysis \textbf{15}(1) (1996)  9--40

\bibitem{clampittParsifal}
Clampitt, D.:
\newblock Alternative interpretations of some measures from {P}arsifal.
\newblock Journal of Music Theory \textbf{42}(2) (1998)  321--334

\bibitem{oshita}
Oshita, K.:
\newblock The hexatonic systems under neo-{R}iemannian theory: an exploration
  of the mathematical analysis of music.
\newblock REU Project, University of Chicago (2009)
\newblock \url{http://www.math.uchicago.edu/~may/VIGRE/VIGRE2009/REUPapers/Oshita.pdf}

\bibitem{bartlett}
Bartlett, P.:
\newblock Triads and topos theory.
\newblock REU Project, University of Chicago (2007)
\newblock \url{http://www.math.uchicago.edu/~may/VIGRE/VIGRE2007/REUPapers/FINALFULL/Bartlett.pdf}

\bibitem{cransfioresatyendra}
Crans, A.S., Fiore, T.M., Satyendra, R.:
\newblock Musical actions of dihedral groups.
\newblock Amer. Math. Monthly \textbf{116}(6) (2009)  479--495

\bibitem{maclanemoerdijk}
Mac~Lane, S., Moerdijk, I.:
\newblock Sheaves in geometry and logic.
\newblock Universitext. Springer-Verlag, New York (1994) A first introduction
  to topos theory, Corrected reprint of the 1992 edition.

\end{thebibliography}

\end{document}